\begin{document}

\newcommand{\di}{\displaystyle}
\def\qed{\ifhmode\unskip\nobreak\hfill$\Box$\bigskip\fi \ifmmode\eqno{Box}\fi}

\newtheorem{thm}{Theorem}
\newtheorem{lem}[thm]{Lemma}
\newtheorem{clm}[thm]{Claim}
\newtheorem{cor}[thm]{Corollary}
\newtheorem{conjecture}[thm]{Conjecture}
\newtheorem{fact}[thm]{Fact}
\newenvironment{thmbis}
  {\addtocounter{thm}{-1}%
   \renewcommand{\thethm}{\arabic{thm}$'$}%
   \begin{thm}}
  {\end{thm}}

\title{Graph Partitions Under Average Degree Constraint}

\author{Yan Wang\thanks{School of Mathematical Sciences, Shanghai Frontier Science Center of Modern Analysis (CMA-Shanghai), Shanghai Jiao Tong University, 200240, China,
{\tt yan.w@sjtu.edu.cn}}
\and
Hehui Wu\thanks{Shanghai Center for Mathematical Sciences, Fudan University,
Shanghai, 200438, China,
{\tt hhwu@fudan.edu.cn}}
~\thanks{Supported in part by National Natural Science Foundation of China grant 11931006, National Key Research and Development Program of China (Grant No. 2020YFA0713200), and the Shanghai Dawn Scholar Program grant 19SG01.}
~
}

\date{\today}

\maketitle

\begin{abstract}
In this paper, we prove that every graph with average degree at least $s+t+2$ has a vertex partition into two parts, such that one part has average degree at least $s$, and the other part has average degree at least $t$.
This solves a conjecture of Cs\'{o}ka, Lo, Norin, Wu and Yepremyan.
\end{abstract}

\section{Introduction}

Let $G$ be a graph.
For $A \subseteq V(G)$, we use $A$ to denote the induced subgraph $G[A]$ of $G$ when there is no confusion,
and $||A||$ (respectively, $|A|$) to denote the number of edges (respectively, vertices) in $G[A]$. 
For a vertex $v$ of $G$, we use $d(v)$ to denote its degree and $\delta(v)$ to denote the minimum degree of $G$.
For $A,B \subseteq V(G)$, let $E(A,B)$ denote the set of edges $ab$ in $G$ where $a \in A$ and $b \in B$.

Vertex coloring is one of the most important concepts in graph theory.
A graph $G$ is called $(d_1,\cdots,d_k)$-colorable if $V(G)$ can be partitioned into $V_1,V_2,\cdots,V_k$ where $G[V_i]$ has maximum degree at most $d_i$ for $i \in [k]$.
Such coloring is also called improper or defective in other literature.
In 1966, Lov\'{a}sz \cite{Lo66} showed that every graph $G$ is $(d_1,\cdots,d_k)$-colorable whenever $(d_1 +1)+\cdots+(d_k +1) \ge \Delta(G)+1$ and equality is attained by the complete graphs.
We refer the readers to \cite{BK77,BKY13,CCW86,CE19,CLO18,CSY19,Ge65,HKS09,KKX14,KKX16,Sh12} for improper coloring
and \cite{AE20,Ar87,BK14,BLM17,BLM20,CGJ97,FH94,Ra11,SN18} for defective coloring. 

A classic result of Stiebitz \cite{St96} showed that for non-negative integers $s$ and $t$, a graph $G$ with minimum degree $s+t+1$ can be partitioned into vertex disjoint subgraphs $G_1$ and $G_2$ such that $G_1$ has minimum degree at least $s$ and $G_2$ has minimum degree at least $t$.
This confirmed a conjecture of Thomassen \cite{Th83}.
The complete gragh $K_{s+t+2}$ also shows this result is tight.
Recently, other vertex partition problems with degree constraint in graphs have been studied extensively in \cite{BL16,BS02,Di00,GK04,HMYZ18,Ka98,LL20,LX17,LX22,MY19, SD02}, and \cite{Ba17, LX21, SS19, SX20, SX21} for those in multigraphs, and \cite{BTV07, GK00} for algorithmic aspect.
Inspired from the work of Stiebitz on partitions under minimum degree constraint and Lov\'{a}sz on partitions under maximum degree constraint, in this paper, we prove the analogous result on partition under average degree constraint, which was conjectured in \cite{CLNWY15} by Cs\'{o}ka, Lo, Norin, Yepremyan and the second author.

\begin{thm}
\label{main}
Let $s,t > 0$ be two reals. Let $G = (V,E)$ be a graph such that $||V|| \ge (s + t + 1)|V|$.
Then there exists a non-trivial partition $(A, B)$ of $V$ such that 
\begin{enumerate}
\item[(i)] $||A||-s|A| \ge 0$;
\item[(ii)] $||B||-t|B| \ge 0$.
\end{enumerate} 
\end{thm}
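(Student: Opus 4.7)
The natural approach is induction on $|V(G)| + \|V(G)\|$, with $G$ a counterexample of minimum total size. A routine reduction shows one may assume $\|V\| < (s+t+1)|V| + 1$: otherwise $\|V\| - 1 \ge (s+t+1)|V|$, so $G - e$ still satisfies the hypothesis for any edge $e$, and by induction $G - e$ has a non-trivial good partition, which is automatically good for $G$ since $\|A\|_G \ge \|A\|_{G-e}$ and $\|B\|_G \ge \|B\|_{G-e}$.

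Next, consider a vertex $v$ of small degree. If $d(v) \le s+t+1$, then $G - v$ also satisfies the hypothesis, and inductively admits a non-trivial good partition $(A', B')$. Placing $v$ in $A'$ preserves goodness whenever $d_{A'}(v) \ge s$; symmetrically for $B'$. The delicate case is $d_{A'}(v) < s$ and $d_{B'}(v) < t$ simultaneously, which is only possible when $d(v) < s+t$; here one must first modify $(A', B')$ by local swaps before inserting $v$. I would choose $(A', B')$ extremally among good partitions of $G - v$ (for instance maximizing $|E(A', B')|$) and argue that this extremality guarantees a successful swap.

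If every vertex satisfies $d(v) > s+t+1$, I would analyze an extremal partition of $G$ itself. Among non-trivial partitions $(A, B)$ satisfying $\|A\| \ge s|A|$ --- such partitions exist, since $(V \setminus \{u\}, \{u\})$ satisfies (i) for any $u$ via $\|V\| - d(u) \ge s(|V|-1)$ --- pick one maximizing $\|B\| - t|B|$; call it $(A^*, B^*)$, set $\mu^* = \|A^*\| - s|A^*| \ge 0$, and suppose for contradiction $\nu^* := t|B^*| - \|B^*\| > 0$. Maximality under single-vertex moves forces: for every $v \in B^*$, either $d_{A^*}(v) < s - \mu^*$ or $d_{B^*}(v) \ge t$; and symmetrically for every $v \in A^*$, either $d_{A^*}(v) > s + \mu^*$ or $d_{B^*}(v) \le t$. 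These dichotomies bound $|E(A^*, B^*)|$ from above, whereas rearranging $\|V\| \ge (s+t+1)|V|$ gives $|E(A^*, B^*)| \ge (t+1)|A^*| + (s+1)|B^*| + \nu^* - \mu^*$ as a lower bound; combining the two should yield the desired contradiction.

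The principal obstacle is the stuck case of the low-degree reduction, where $d_{A'}(v) < s$ and $d_{B'}(v) < t$: a naive swap may destroy the partition's goodness, so a carefully chosen extremal $(A', B')$ (or a dedicated potential function) is needed to force a successful swap path. A secondary complication is that $s, t$ are real, not integer, so all local inequalities must be tracked with fractional slack rather than invoking parity or rounding shortcuts familiar from Stiebitz-type integer arguments.
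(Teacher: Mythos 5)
Your proposal takes a genuinely different route from the paper's: the paper does not induct on a minimum counterexample and perform local swaps, but instead reformulates the problem as an integer program, proves a \emph{strengthened} statement (Theorem~\ref{main2}) with an explicit slack guarantee $\|A\|-s|A|+\|B\|-t|B|\ge T(A\cup B)-1$, and derives it by a two-stage fractional relaxation with carefully designed objective functions and rounding. However, your sketch has a genuine gap exactly where you flag the ``principal obstacle,'' and I do not think the proposed fix closes it.

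The key difficulty is slack, and it is not merely a technical detail. When $d(v)$ is small (say an isolated or degree-$1$ vertex, which is perfectly compatible with $\|V\|\ge(s+t+1)|V|$), any good partition $(A',B')$ of $G-v$ must satisfy $\|A'\|\ge s|A'|+s-d_{A'}(v)$ or $\|B'\|\ge t|B'|+t-d_{B'}(v)$ before $v$ can be inserted, which requires slack on the order of $s$ or $t$ above the thresholds. Local swaps inside $G-v$ cannot manufacture this slack: in a tight instance where every good partition of $G-v$ has $\|A'\|\approx s|A'|$ and $\|B'\|\approx t|B'|$, choosing $(A',B')$ ``extremally'' (e.g.\ maximizing $|E(A',B')|$, or any other single potential) gives no guaranteed margin. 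The paper resolves this precisely by making a quantified slack part of the statement being proved --- condition (3) of Theorem~\ref{main2} --- and by first disposing of $s\le 1/2$ or $t\le 1/2$ directly (Claim~\ref{small_st}) so the slack inequality can be made to work for $s,t>1/2$. Without an analogous strengthening, your induction hypothesis is simply too weak to carry the low-degree-vertex step.

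The third case analysis (minimum degree $>s+t+1$, extremal partition of $G$ itself) also does not obviously close. Your dichotomies say, for $v\in B^*$, either $d_{A^*}(v)<s-\mu^*$ or $d_{B^*}(v)\ge t$; and for $v\in A^*$, either $d_{A^*}(v)>s+\mu^*$ or $d_{B^*}(v)\le t$. But for vertices landing in the ``move not allowed'' branch (e.g.\ $v\in A^*$ with $d_{A^*}(v)>s+\mu^*$), you get no upper bound at all on $d_{B^*}(v)$, so $|E(A^*,B^*)|$ cannot be bounded from above by these local conditions alone; meanwhile the lower bound $|E(A^*,B^*)|\ge(t+1)|A^*|+(s+1)|B^*|-\mu^*+\nu^*$ is easily satisfied by $|A^*||B^*|$ when both parts are large. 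You would need a substantially sharper potential or a second-order move analysis, and even then the singleton boundary case ($|B^*|=1$) and the non-integrality of $s,t$ both need explicit handling. In short, the approach might be salvageable with a strengthened induction hypothesis carrying quantitative slack, but as written the ``stuck case'' is not a secondary complication --- it is the heart of the problem, and this is exactly what drives the paper to its LP-relaxation-and-rounding machinery.
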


Since the average degree of $G$ equals $2||V||/|V|$, Theorem \ref{main} is equivalent to that every graph with average degree at least $s'+t'+2$ has a non-trivial vertex partition into two parts, such that one part has average degree at least $s'$, and the other part has average degree at least $t'$.

In \cite{CLNWY15}, the authors showed that if $||V|| > (s + t + 1)(|V| - 1)$, then there exists a non-null subsets $A,B$ of $V$ such that $||A||-s(|A| - 1) > 0$ and $||B||-t(|B|-1) > 0$.
They applied this result to prove a conjecture of Reed and Wood and a conjecture of Harvey and Wood on the extremal functions for disconnected minors. 
Note that Theorem \ref{main} strengthens several aspects of this result.

Note that the condition $||V|| \ge (s+t+1)|V|$ in Theorem \ref{main} is sharp in the sense that $s+t+1$ cannot be replaced by any smaller real number. 
To see this, we consider the following example.
Let $s,t$ be two positive integers, and $n$ be a sufficiently large integer. 
Let $H = (V,E)$ be a graph consisting of a clique of size $s +t + 1$ and $n - (s+t+1)$ isolated vertices and all the edges between them.
In other words, $V = X \dot\cup Y$ where $|X| = s + t + 1$; and $E = \{ uv | u \in X  \text{ or } v \in X \}$.
So 
$||V|| = \frac{(s+t+1)(s+t)}{2} + (n-s-t-1)(s+t+1)$.
One can check that 
$||V||/|V| = (s+t+1)(1-\frac{s+t+2}{2n})  \rightarrow s+t+1$ as $n \rightarrow \infty$.
Suppose there exists non-null vertex partition $V_1$ and $V_2$ of $V$ such that $||V_1|| \geq s |V_1|$, $||V_2|| \geq t |V_2|$.
Let $X_1 = V_1 \cap X$ and $X_2 = V_2 \cap X$.
So $|X_1| \le s$ or $|X_2| \le t$.
By symmetry, we may assume that $|X_1| \le s$.
Now $||V_1||/|V_1| = s-\frac{s|X_1|-(|X_1|(|X_1|-1))/2}{|V_1|} < s$, a contradiction.

We also comment that the classic result of Stiebtiz on minimum degree condition needs $s$ and $t$ to be integers. 
If $s$ and $t$ are fractional, the minimum degree condition becomes effectively $\lceil s \rceil + \lceil t \rceil + 1$, which is $\lceil s+t+2 \rceil$ in worse case.
In this sense, complete graph is also a sharp example for Theorem \ref{main}.

Theorem \ref{main} implies the following theorem, which is sometimes more convenient to use. 

\begin{thmbis}
\label{main_prime}
Let $s,t > 0$ be two reals. Let $G = (V,E)$ be a graph such that $||V|| > (s + t + 1)|V|$.
Then there exists a non-trivial partition $(A, B)$ of $V$ such that 
\begin{enumerate}
\item[(i)] $||A||-s|A| > 0$;
\item[(ii)] $||B||-t|B| > 0$.
\end{enumerate} 
\end{thmbis}

For a graph $H$, let $c(H)$ be the supremum of $||G||/|G|$ taken over all non-null graphs $G$ not containing $H$ as a minor. 
Note that Theorem \ref{main_prime} also implies the following corollary, which answers a question of Qian (see \cite{CLNWY15}).

\begin{cor}
\label{corollary_qian}
Let $H$ be a disjoint union of non-null graphs $H_1$ and $H_2$, then 
$$c(H) \leq c(H_1) + c(H_2) + 1.$$
\end{cor}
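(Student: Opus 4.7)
The plan is to derive Corollary~\ref{corollary_qian} as an immediate consequence of Theorem~\ref{main_prime}, arguing by contradiction. Set $s := c(H_1)$ and $t := c(H_2)$, and suppose for contradiction that $c(H) > s + t + 1$. By the definition of the supremum defining $c(H)$, there exists a non-null graph $G$ that has no $H$-minor and satisfies $\|V(G)\|/|V(G)| > s + t + 1$, i.e., $\|V(G)\| > (s+t+1)\,|V(G)|$.

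Next, I would apply Theorem~\ref{main_prime} to $G$ with this choice of $s$ and $t$ to obtain a non-trivial partition $(A,B)$ of $V(G)$ with $\|A\| > s\,|A|$ and $\|B\| > t\,|B|$. Equivalently, $\|G[A]\|/|A| > c(H_1)$ and $\|G[B]\|/|B| > c(H_2)$. By the very definition of $c(H_i)$ as the supremum of edge-to-vertex ratios over $H_i$-minor-free non-null graphs, these strict inequalities force $G[A]$ to contain $H_1$ as a minor and $G[B]$ to contain $H_2$ as a minor. Since $A$ and $B$ are disjoint, the branch sets witnessing these two minors are disjoint in $G$, and together they exhibit $H = H_1 \sqcup H_2$ as a minor of $G$, contradicting the choice of $G$.

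There is no substantive obstacle beyond invoking the correct form of the main result: the argument is a one-line reduction. The only point worth emphasising is the necessity of the strict version (Theorem~\ref{main_prime}) rather than Theorem~\ref{main} itself. Because $c(H_i)$ is defined as a supremum that need not be attained, the non-strict inequalities $\|G[A]\|/|A| \ge c(H_1)$ and $\|G[B]\|/|B| \ge c(H_2)$ would not, in general, guarantee that $G[A]$ contains $H_1$ or that $G[B]$ contains $H_2$ as a minor; the strict inequalities do, which is precisely why the paper records Theorem~\ref{main_prime} before deducing this corollary.
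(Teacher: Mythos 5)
Your argument is correct and is essentially the paper's proof, just phrased as a proof by contradiction rather than directly; both apply Theorem~\ref{main_prime} with $s=c(H_1)$, $t=c(H_2)$ to a graph with $\|V\|>(s+t+1)|V|$, conclude each part contains the corresponding $H_i$-minor, and assemble an $H$-minor from the two disjoint parts. Your closing remark about why the strict inequality version is needed (since $c(H_i)$ is a supremum and need not be attained) is exactly the point the paper implicitly relies on when it records Theorem~\ref{main_prime} before this corollary.
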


\begin{proof}
Let $G=(V,E)$ be a non-null graph such that $||V|| > (c(H_1)+c(H_2)+1)|V|$.
By Theorem \ref{main_prime}, there exist vertex disjoint non-null subgraphs $G_1$ and $G_2$ of $G$ such that $||V(G_1)|| > c(H_1)|V(G_1)|$ and $||V(G_2)|| > c(H_2)|V(G_2)|$.
By definition, $G_i$ contains $H_i$ as a minor for $i \in [2]$.
Then $G$ contains $H$ as a minor, which implies that $c(H) \le c(H_1) + c(H_2) + 1$.
\end{proof}

The proof of Theorem \ref{main} mostly adopts the structure of the proof in \cite{CLNWY15}: we consider the partition problem as an integer programming problem, and try to find an integer solution from its linear relaxation in two steps. In each step, we design an approximate objective function, and then use integer rounding method to get a partial solution. However, Theorem 7 in \cite{CLNWY15} can only guarantee to obtain two vertex disjoint parts, but not necessary a partition. To solve this problem, we introduce a strengthened theorem (Theorem \ref{main2}). Also, to obtain our improved result, we carefully redesigned two new objective functions, and finish the proof with more refined arguments.

\section{A strengthened version of Theorem \ref{main}}

First, we prove Theorem \ref{main} separately when $s$ or $t$ is no more than $1/2$.

\begin{clm}
\label{small_st}
Theorem \ref{main} is true when $s \le 1/2$ or $t \le 1/2$.
\end{clm}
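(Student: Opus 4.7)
By the $s$--$t$ symmetry in Theorem \ref{main}, assume $s \le 1/2$. The plan is to find a single edge $xy \in E$ so that the partition $A := \{x,y\}$, $B := V \setminus A$ satisfies both conclusions. Condition (i) is automatic, since $||A|| = 1$ and $s|A| = 2s \le 1$. Condition (ii) rewrites equivalently as
\[
d(x) + d(y) \;\le\; ||V|| - t(|V|-2) + 1.
\]

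To produce such an edge, let $V' \subseteq V$ be the set of non-isolated vertices of $G$ and set $n' := |V'|$. Every edge of $G$ lies inside $V'$, so $\binom{n'}{2} \ge ||V|| \ge (s+t+1)|V|$. Choose $x \in V'$ of minimum degree in $V'$; the handshake identity $\sum_{v \in V'} d(v) = 2||V||$ gives $d(x) \le 2||V||/n'$. Take $y$ to be any neighbor of $x$; since every neighbor of $y$ must itself be non-isolated, $d(y) \le n'-1$. Combining,
\[
d(x) + d(y) \;\le\; \frac{2||V||}{n'} + n' - 1.
\]
Since the left side of the desired inequality $\frac{2||V||}{n'} + n' - 1 \le ||V|| - t(|V|-2) + 1$ grows no faster in $||V||$ than the right (as $n' \ge 3$), it suffices to treat the extremal case $||V|| = (s+t+1)|V|$. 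After clearing denominators, the inequality becomes
\[
n'\bigl[(s+1)|V| + 2(t+1) - n'\bigr] \;\ge\; 2(s+t+1)|V|.
\]

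This downward quadratic in $n'$ has two positive roots $r_- \le r_+$ with $r_- r_+ = 2(s+t+1)|V|$. I would verify $r_- \le n' \le r_+$ in two steps: first, checking $r_+ \ge |V|$ reduces by direct substitution to $|V| \ge 2$, which yields $n' \le |V| \le r_+$; second, Vieta then gives $r_- \le 2(s+t+1)$, while the inequality $n'(n'-1) \ge 2(s+t+1)|V| \ge 2(s+t+1)\bigl(2(s+t+1)+1\bigr)$ (the last step using $|V| \ge 2(s+t+1)+1$, which itself comes from $\binom{|V|}{2} \ge (s+t+1)|V|$) yields $n' \ge 2(s+t+1)+1 > r_-$. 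The main obstacle is this final quadratic verification: the density assumption has to be leveraged twice---once to place $n'$ above $r_-$ and once to ensure $|V|$ lies below $r_+$---and the argument must go through uniformly whether $G$ is nearly regular (so that $n' \approx |V|$) or contains many isolated vertices (so that $n' \ll |V|$).
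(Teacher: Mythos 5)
Your proposal mirrors the paper's construction almost exactly: take a vertex $x$ of minimum degree among the non-isolated vertices, take a neighbor $y$, and show the partition into $\{x,y\}$ and its complement works, using the bounds $d(x)\le 2||V||/n'$ and $d(y)\le n'-1$ where $n'=|V'|$ is the number of non-isolated vertices (the paper writes this as $|V|-|I|$). The only genuine divergence is in the algebraic finish. You reduce to the extremal case $||V||=(s+t+1)|V|$ (valid since the slope $2/n'$ of the left side in $||V||$ is less than the slope $1$ of the right side once $n'\ge 2$), recast the target as the concave quadratic inequality $n'\bigl[(s+1)|V|+2(t+1)-n'\bigr]\ge 2(s+t+1)|V|$, and then locate $n'$ between the roots $r_-\le r_+$ using Vieta. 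Step 1 ($n'\le r_+$) checks out because $q(|V|)-2(s+t+1)|V|=s|V|(|V|-2)\ge 0$, so $|V|\le r_+$ and $n'\le|V|$; step 2 ($n'\ge r_-$) checks out because $r_-=2(s+t+1)|V|/r_+\le 2(s+t+1)$, while $n'(n'-1)\ge 2||V||\ge 2(s+t+1)|V|\ge 2(s+t+1)\bigl(2(s+t+1)+1\bigr)$ forces $n'\ge 2(s+t+1)+1$. The paper instead observes that $m\mapsto 2||V||/m+m$ is increasing for $m>\sqrt{2||V||}$, replaces $n'$ by $|V|$ (both lie above $\sqrt{2||V||}$), and finishes with the one-line factorization $||V||-2||V||/|V|-|V|+2=(|V|-2)\bigl(||V||/|V|-1\bigr)\ge(|V|-2)(s+t)>s|A|$. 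Both finishes are correct; the paper's is somewhat slicker since it never introduces the quadratic's roots or needs the extremal reduction. Your conditional phrasing (``I would verify\dots'') should be rewritten as actual computations in a final version, but every step you outline does check out.
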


\begin{proof}
By symmetry, suppose $t \le 1/2$.
Let $I$ be the set of isolated vertices in $G$. 
Let $x$ be a vertex of minimum degree in $V \backslash I$.
Note that $d(x) \leq 2||V|| / (|V| - |I|)$.
Let $y$ be a vertex adjacent to $x$. 
Let $A:= V \backslash \{x , y\}$ and $B := \{x,y\}$.
We have $||B|| = 1 \ge t|B|$ and 
\begin{equation*}
\begin{split}
||A|| 
&= ||V|| - 1 - (d(x) - 1) - (d(y) - 1) \\
&\geq ||V|| - 1 - (\frac{2||V||}{|V| - |I|} - 1) - (|V| - |I| - 1 - 1) \\
&= ||V|| - \frac{2||V||}{|V| - |I|} - (|V| - |I|) + 2.
\end{split}
\end{equation*}
Note that $||V|| \le \binom{|V| - |I|}{2} < \frac{{(|V| - |I|)}^2}{2}$,
so $|V| \geq |V| - |I| > \sqrt{2||V||}$. 
Thus, 
\begin{equation*}
\begin{split}
||A|| 
&\geq ||V|| - 2||V|| / |V| - |V| + 2 \\
&= (|V| - 2) (||V||/|V| - 1) \\
&\geq (|V| - 2)(s + t + 1 - 1) \\
&> s|A|.
\end{split}
\end{equation*}
Therefore, $(A, B)$ is a partition of $V$ as desired. 
\end{proof}

Let $X$ be a set of vertices and $s,t$ be two real numbers. 
We define $T(X)=\max\{0, ||X||-(s+t+1)|X|\}$.
Now we introduce a stronger version of Theorem \ref{main}.

\begin{thm}
\label{main2}
Let $s,t \in \mathbb{R}$ with $s,t > 1/2$.
If $G=(V,E)$ is a graph satisfying $||V||-(s+t+1)|V|\ge 0$, then there exist disjoint non-empty sets $A$ and $B$ such that 
\begin{enumerate}
\item[(1)] $||A||-s|A|\ge 0$;
\item[(2)] $||B||-t|B|\ge 0$;
\item[(3)] $||A||-s|A|+||B||-t|B|\ge T(A\cup B)-1$.
\end{enumerate} 
\end{thm}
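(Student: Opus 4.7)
The plan is to adapt the two-step LP-relaxation/integer-rounding framework of \cite{CLNWY15}, redesigning both objective functions to capture the additional condition (3).

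My first move would be to reformulate (3) in a more tractable form. Since $A$ and $B$ are disjoint, $\|A\cup B\| = \|A\| + \|B\| + |E(A,B)|$; substituting into $T(A\cup B) = \max\{0,\|A\cup B\| - (s+t+1)|A\cup B|\}$, condition (3) is automatic when $T(A\cup B) = 0$ (it then follows from (1) and (2)) and, in the non-trivial case $T(A\cup B)>0$, is equivalent to the cut bound
\[
|E(A,B)| \le (t+1)|A| + (s+1)|B| + 1.
\]
So the task reduces to producing disjoint non-empty $A, B$ with $\|A\| \ge s|A|$, $\|B\| \ge t|B|$, and, when applicable, this cut inequality.

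In step one, I would choose $A$ as a maximizer of a tuned objective $\Phi_A(A) = \|A\| - s|A| + \sigma(A)$ over proper non-empty subsets of $V$, where $\sigma(A)$ is a correction term chosen so that the extremality of $A$ translates into per-vertex degree bounds on $V \setminus A$ that will later feed the cut inequality. A baseline choice $\sigma \equiv 0$ already secures (1) at the maximizer (since $A = V$ yields $\|V\| - s|V| \ge (t+1)|V| > 0$ by the hypothesis $\|V\| \ge (s+t+1)|V|$) but yields no control over the cut; the novelty is choosing $\sigma$ so that (1) is preserved while simultaneously forcing a global bound on a quantity such as $\sum_{v \notin A}\max\{d_A(v) - (s+1),\, 0\}$. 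In step two, given $A$, I would choose $B \subseteq V \setminus A$ maximizing an objective of the form $\Phi_B(B) = \|B\| - t|B| + (s+1)|B| - |E(A,B)|$, which bundles (2) and the cut inequality into a single function; local extremality of the $\Phi_B$-maximizer (``a vertex cannot be moved in or out of $B$ without decreasing $\Phi_B$'') yields vertex-level inequalities that, combined with the degree bounds from step one, give both (2) and the cut bound simultaneously. Integer rounding (or a direct combinatorial extremal selection) plays the role of passing from fractional to integer solutions at each step.

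The main obstacle is the calibration between the two steps: $\sigma(A)$ must be strong enough that $\Phi_A$-extremality forces cut control in step two, yet weak enough that $\|A\| - s|A| \ge 0$ survives and $V \setminus A$ retains enough density to admit a non-empty $B$ with $\Phi_B \ge 0$. I expect the hypothesis $s, t > 1/2$ to enter precisely at the single-vertex rounding step, where the per-vertex cost $s$ or $t$ must exceed $1/2$ to dominate the $\pm 1$ fluctuation from rounding and from the slack in (3); the remaining small-$s$ or small-$t$ regime is already handled by Claim \ref{small_st}.
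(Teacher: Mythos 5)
Your reformulation of condition (3) as the cut bound $|E(A,B)| \le (t+1)|A|+(s+1)|B|+1$ (in the nontrivial case $T(A\cup B)>0$) is correct, and is a clean way to think about what must be achieved. However, your two-step plan diverges fundamentally from the paper's argument, and as written it contains a gap that you flag yourself but do not close.

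The paper does not choose $A$ first and then $B$. It optimizes a single fractional vector $\vec{x}\in[0,1]^n$, with $x_v$ and $1-x_v$ recording how vertex $v$ is split between the two parts, so both parts are controlled simultaneously throughout. Its two ``steps'' are not ``select $A$'' then ``select $B$'' but two successive \emph{rounding} stages: first drive $\vec{x}$ to a point $\vec y$ whose fractional support $C$ is a clique (Lemma \ref{lem2}), exploiting linearity of $f_1,g_1$ along directions supported on a non-adjacent pair; then replace $f_0,g_0$ by lower-bounding surrogates $f_2,g_2$ that are genuinely linear on $C$ (the terms $\tfrac12\sum_C(x_v-x_v^2)$ and $\tfrac12\bigl(\sum_C x_v-\sum_C y_v+\bar p\bigr)^2$ exist precisely to cancel the quadratic error over the clique), round again to at most one fractional coordinate (Lemma \ref{lem3}), and finally analyze the two integer neighbors $\hat z^{\pm}$. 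Because $A=\{v:x_v=1\}$ and $B=\{v:x_v=0\}$ are complementary by construction, the question ``does $V\setminus A$ still have enough density for $B$?'' never arises: the density is allocated continuously from the outset.

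Your sequential scheme reintroduces exactly that difficulty, and its resolution is entirely hidden inside the unspecified correction term $\sigma(A)$. You state the desiderata for $\sigma$ --- strong enough to force cut control in step two, weak enough to preserve $\|A\|\ge s|A|$ and leave $V\setminus A$ dense enough for $B$ --- but you do not exhibit any candidate $\sigma$, nor show that any of these three properties can hold simultaneously. Without that, the plan does not reduce the theorem to anything checkable. A second issue: $\Phi_B$ packs $\|B\|-t|B|$ and the cut constraint into one scalar; a maximizer of the sum can overshoot one term and fail the other, and the hoped-for rescue via local-move inequalities is exactly the argument that must be carried out, not assumed. Finally, $s,t>1/2$ is not used only at one rounding step: in the paper it enters in Lemma \ref{lem2} (to get $\sum_v y_v\ge 2(s+t+1)p>2$, ruling out $\vec y'=\vec 0$), in Lemma \ref{lem4} (to make $A\ge (s+t+1)p-\tfrac12>0$ and $B>0$), and in the closing numerical estimates, so your accounting of where the hypothesis is spent would also need to be redone if you pursue this route.
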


It is not difficult to show that Theorem \ref{main2} implies Theorem \ref{main}.

\begin{proof}[Proof of Theorem \ref{main}]
By Claim \ref{small_st}, we may assume $s, t \ge 1/2$.
Let $A, B$ be sets given in the condition of Theorem \ref{main2}.
If $V = A \cup B$, then $(A,B)$ is the desired partition for Theorem \ref{main}.
Otherwise, let $C := V \backslash (A \cup B)$.
Since $||V|| \ge (s+t+1)|V|$ and $T(A \cup B) \ge ||A \cup B|| - (s +t + 1)|A \cup B|$, 
$$\begin{array} {ll} 
&(|E(A,C)| + ||C||) + (|E(B,C)| + ||C||)  \\
&= ||V|| + ||C|| - ||A \cup B||\\
&\ge (s + t + 1)(|A| + |B| + |C|) - (s +t +1)(|A| + |B|) - T(A \cup B) \\
&= (s +t + 1)|C| - T(A \cup B) \\
&\geq (s +t)|C| - (T(A \cup B) - 1) \\
&\ge (s +t)|C| - (||A|| - s|A| + ||B|| - t|B|) \text{\quad (by (3) of Theorem \ref{main2})} \\
& = (s(|A| + |C|) - ||A||) + (t(|B| + |C|) - ||B|| ).
\end{array}$$ 
So we have
$|E(A,C)| + ||C|| \ge s(|A| + |C|) - ||A||$
or 
$|E(B,C)| + ||C|| \ge t(|B| + |C|) - ||B||$.

Hence, by symmetry, we may assume $|E(A,C)| + ||C|| \ge s(|A| + |C|) - ||A||$.
Thus $||A \cup C|| = ||A|| + |E(A,C)| + ||C|| \ge s(|A| + |C|)$, which implies $(A \cup C, B)$ is a desired partition. 
\end{proof}

\section{Proof of Theorem \ref{main2}}

By contradiction, suppose Theorem \ref{main2} is false.
Let $G=(V,E)$ be a minimal counterexample.
So $G$ is non-empty and $||V||-(s+t+1)|V| \ge  0$.
By minimality of $G$, we have 
\begin{fact}
\label{fact5}
For any $\emptyset \subsetneq X \subsetneq G$, $||X||-(s+t+1)|X| < 0$.
\end{fact}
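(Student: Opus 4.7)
The plan is to derive Fact~\ref{fact5} directly from the minimality of $G$ as a counterexample to Theorem~\ref{main2}. I would argue by contradiction: suppose there exists a proper non-empty subset $X \subsetneq V$ with $||X|| - (s+t+1)|X| \ge 0$. Then consider the induced subgraph $G[X]$, which is strictly smaller than $G$ in vertex count. The hypothesis of Theorem~\ref{main2} is inherited by $G[X]$: the reals $s, t > 1/2$ are unchanged, and by our assumption on $X$, the edge-deficiency condition $||V(G[X])|| - (s+t+1)|V(G[X])| \ge 0$ holds.

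By the minimality of $G$, Theorem~\ref{main2} cannot fail on the smaller graph $G[X]$, so applying it produces disjoint non-empty sets $A, B \subseteq X$ satisfying conditions (1), (2), and (3) of Theorem~\ref{main2} inside $G[X]$. The key remark is that the three conditions depend only on the edge counts $||A||$, $||B||$, $||A \cup B||$ and the sizes $|A|$, $|B|$, $|A \cup B|$. Because $G[X]$ is an induced subgraph of $G$ and $A, B \subseteq X$, each of these quantities is identical whether computed in $G[X]$ or in $G$; the value $T(A \cup B)$ appearing in (3) is therefore also unchanged. Hence $A, B$ serve as a witness to Theorem~\ref{main2} for $G$ itself, contradicting the fact that $G$ was chosen as a counterexample.

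The argument is essentially bookkeeping, and the only potential subtlety is the clean transfer of the three conditions from $G[X]$ back to the ambient graph $G$; as noted, this is automatic because every term in (1)--(3) is purely local to $A$, $B$, and $A \cup B$. The substance of Fact~\ref{fact5} is thus the strict edge-deficiency statement $||X|| - (s+t+1)|X| < 0$ on every proper non-empty vertex subset, which is exactly the extremality property one expects to exploit in the subsequent analysis of the minimal counterexample.
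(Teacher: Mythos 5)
Your proposal is correct and is exactly the argument the paper has in mind; the paper simply states ``By minimality of $G$, we have Fact~\ref{fact5}'' without elaboration, and you have correctly filled in the details. The key observation you make --- that conditions (1)--(3) of Theorem~\ref{main2} depend only on $||A||$, $||B||$, $||A\cup B||$, $|A|$, $|B|$, $|A\cup B|$, all of which agree between $G[X]$ and $G$ since $G[X]$ is induced and $A,B\subseteq X$ --- is precisely the reason the transfer is automatic, so a witness for $G[X]$ is also a witness for $G$, contradicting that $G$ is a counterexample.
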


Let $p=\frac{s+1}{s+t+2}, \bar{p}=\frac{t+1}{s+t+2}$, $T=T(V)>0$ and $n=|V|$. 
Note that $|V| \ge 2s+2t+3$.

\begin{lem}
\label{LEM:BoundofT}
All of the following hold: 
\begin{enumerate}
    \item[(1)] $G$ has no clique of size at least $2s+2t+3$;
    \item[(2)] $\delta(G) > s+t+1+T(G)$. Thus, $|V| > s+t+2+T(G)$;
    \item[(3)] $0\le T(G) < s+t+2$.
\end{enumerate}
\end{lem}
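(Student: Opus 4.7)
The plan is to handle the three parts in the order (2), (1), (3), since (3) will follow from (2) via a short density calculation.

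For (2), I would argue by contradiction using the minimality of $G$. Suppose some vertex $v$ satisfies $d(v) \le s+t+1+T(G)$. A direct computation gives $||V\setminus\{v\}|| - (s+t+1)|V\setminus\{v\}| = T(G) + (s+t+1) - d(v) \ge 0$, so $G-v$ satisfies the hypothesis of Theorem~\ref{main2}. By minimality of $G$, $G-v$ is not a counterexample, hence admits disjoint non-empty $A, B \subseteq V\setminus\{v\}$ meeting conditions (1)--(3) in $G-v$; since the induced subgraphs $G[A], G[B], G[A\cup B]$ do not involve $v$, the same $A, B$ witness Theorem~\ref{main2} in $G$, contradicting that $G$ is a counterexample. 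The bound $|V| > s+t+2+T(G)$ is then immediate from $|V|-1 \ge \delta(G) > s+t+1+T(G)$.

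For (1), I would look for a contradiction by constructing the witness $(A, B)$ inside a clique $K$ of size at least $2s+2t+3$. Taking $a := \lceil 2s+1\rceil$ and $b := \lceil 2t+1\rceil$, a short case analysis on whether $2s$ and $2t$ are integers shows $a+b \le |K|$, so one can pick disjoint cliques $A, B \subseteq K$ of sizes $a$ and $b$. Conditions (1), (2) of Theorem~\ref{main2} then read $a \ge 2s+1$ and $b \ge 2t+1$ and hold by construction. For condition (3), set $\alpha := a-1-2s$, $\beta := b-1-2t$ (both in $[0,1)$) and $m := a+b$, and compute $T(A\cup B) = \max\{0, m(\alpha+\beta-1)/2\}$. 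When $\alpha+\beta \le 1$, one has $T(A\cup B)=0$ and (3) follows from (1), (2); otherwise, after algebra, (3) reduces to $a(1-\beta)+b(1-\alpha)+2 \ge 0$, which is trivial.

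For (3), the lower bound is by hypothesis. For the upper bound, assume $T(G) \ge s+t+2$ for contradiction. Part (2) then yields both $|V| > s+t+2+T(G) \ge 2s+2t+4$ and $\delta(G) > s+t+1+T(G)$, so the handshake lemma gives $2||V|| \ge \delta(G)|V| > (s+t+1+T(G))|V|$. Combined with $2||V|| = 2(s+t+1)|V| + 2T(G)$, this rearranges to $T(G)(|V|-2) < (s+t+1)|V|$, i.e., $T(G) < (s+t+1) + \tfrac{2(s+t+1)}{|V|-2} < s+t+2$, where the last step uses $|V|-2 > 2(s+t+1)$. This contradicts $T(G) \ge s+t+2$.

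The main obstacle I expect is the case analysis in part (1): ensuring $a+b \le |K|$ and verifying condition (3) under the various values of $\{2s\}$ and $\{2t\}$ both require careful bookkeeping with ceilings. Parts (2) and (3) are then essentially routine once the minimality argument for (2) is set up.
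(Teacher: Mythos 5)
Your proof is correct and follows essentially the same approach as the paper: (2) via the minimality of $G$ and a direct edge count on $G-v$, (1) via the cliques of sizes $\lceil 2s+1\rceil$ and $\lceil 2t+1\rceil$ inside the big clique, and (3) via the handshake lemma combined with $\|V\|=(s+t+1)|V|+T(G)$. The only notable difference is a small streamlining in (3): you assume $T(G)\ge s+t+2$ and deduce $|V|>2s+2t+4$ from (2), avoiding the case split on $|V|$ that the paper performs.
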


\begin{proof}
First, we show (1). For, otherwise, let $C$ be a clique of size $\lceil 2s+2t+3 \rceil$ in $G$.
Now let $A$ and $B$ be disjoint cliques of size $\lceil 2s + 1\rceil$, $\lceil 2t +1 \rceil$ respectively (One can verify that $\lceil 2s + 1 \rceil + \lceil 2t + 1 \rceil \le \lceil 2s+2t+3 \rceil$).
Thus $||A||-s|A|=\frac{|A|}{2}(|A|-1-2s)\ge \max\{0, \frac{|A|}{2}((|A|-1-2s)+(|B|-1-2t)-1)\}$. Similarly, $||B||-t|B|\ge \max\{0, \frac{|B|}{2}((|A|-1-2s)+(|B|-1-2t)-1)\}$. Together, we have $||A||-s|A|+||B||-t|B|\ge \max\{0, \frac{|A|+|B|}{2}(|A|+|B|-1-2s-2t-2)\}= T(A\cup B)$. A contradiction.

Next, we show (2). For, otherwise, suppose there exists a vertex $v \in G$ with $d(v) \le s+t+1+T(G)$.
Then $|V \backslash \{v\}| \ge 1$ and $||V \backslash \{v\}|| = ||V|| - d(v) \ge (s+t+1)|V| + T(G) - (s+t+1+T(G)) = (s+t+1)|V \backslash \{v\}|$, which contradicts Fact \ref{fact5}.

Finally, we show (3). Since $(s+t+1)|V|+T(G)=||V|| \ge \delta(G)|V|/2 \ge \frac{(s+t+1+T(G))|V|}{2}$,  we have
$T(G) \le \frac{|V|}{|V|-2}(s+t+1).$ 
If $|V|>2s+2t+4$, then $T(G) < \frac{2s+2t+4}{2s+2t+2}(s+t+1)=s+t+2$; 
If $|V|\le 2s+2t+4$, then $T(G) < \delta(G)-s-t-1\le |V|-1-s-t-1 = s+t+2$.
\end{proof}

We consider the vertex partition problem as an integer programming problem. 
Let $V = \{v_1, v_2, . . . , v_n\}$, 
and let $\vec{x} := (x_{v_1},x_{v_2},...,x_{v_n})$  $\in [0,1]^n$.
We define the following functions: 
\begin{equation*}
\begin{split}
f_0(\vec{x}) &:=\sum_{uv \in E} x_u x_v - s\sum_{v \in V} x_v\\
g_0(\vec{x}) &:=\sum_{uv \in E} (1-x_u)(1-x_v)-t\sum_{v \in V} (1-x_v) 
\end{split}
\end{equation*}

\begin{clm}
\label{clm:goal}
Theorem \ref{main2} is true if there exists $\vec{x} \in \{0, 1\}^n$ such that 
\begin{itemize}
    \item[(i)] $\vec{x} \neq \vec{0}$ and $\vec{x} \neq \vec{1}$;
    \item[(ii)] $f_0(\vec{x})\geq 0$ and $g_0(\vec{x}) \ge 0$;
    \item[(iii)] $f_0(\vec{x}) + g_0(\vec{x}) \ge T - 1$.
\end{itemize}
\end{clm}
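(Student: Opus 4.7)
The plan is to interpret any such $\vec{x}$ as the indicator vector of a vertex partition and check that conditions (i)--(iii) translate directly into the three conclusions of Theorem \ref{main2}. I would set $A := \{v \in V : x_v = 1\}$ and $B := \{v \in V : x_v = 0\}$. Since $\vec{x}$ takes values in $\{0,1\}$, the pair $(A,B)$ is a partition of $V$; condition (i) excludes the cases $A = \emptyset$ and $B = \emptyset$, so both parts are non-empty as required.

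Next, I would rewrite $f_0$ and $g_0$ in combinatorial terms. An edge $uv$ contributes $x_u x_v = 1$ to $\sum_{uv \in E} x_u x_v$ precisely when both endpoints lie in $A$, so this sum equals $||A||$; likewise $\sum_{v \in V} x_v = |A|$, and therefore $f_0(\vec{x}) = ||A|| - s|A|$. An analogous calculation with $(1-x_v)$ in place of $x_v$ identifies a $(1-x_u)(1-x_v)$ factor as the indicator that the edge $uv$ has both endpoints in $B$, giving $g_0(\vec{x}) = ||B|| - t|B|$.

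With these identifications, the conclusion is immediate. Condition (ii) becomes $||A|| - s|A| \ge 0$ and $||B|| - t|B| \ge 0$, which are parts (1) and (2) of Theorem \ref{main2}. Because $(A,B)$ partitions $V$, we have $A \cup B = V$ and hence $T(A \cup B) = T(V) = T$, so condition (iii) reads $||A|| - s|A| + ||B|| - t|B| \ge T(A \cup B) - 1$, which is exactly part (3).

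There is essentially no obstacle here: this claim is a bookkeeping statement whose role is to convert the partition problem into the existence of a suitable $0/1$ vector, so that the remainder of the proof can be carried out entirely within the integer programming framework provided by $f_0$ and $g_0$. The real work in the paper will be producing such an integer $\vec{x}$, presumably by starting from a fractional optimizer of (a perturbation of) $f_0+g_0$ and rounding, but the present claim only needs the dictionary above.
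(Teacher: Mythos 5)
Your proof is correct and follows the same approach as the paper's: identify $A$ and $B$ via the $0/1$ vector, translate $f_0$ and $g_0$ into $||A||-s|A|$ and $||B||-t|B|$, and observe that $T(A\cup B)=T(V)=T$ since $(A,B)$ is a full partition. The paper states this more tersely but the reasoning is identical.
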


\begin{proof}
Let $A = \{v \in V |x_v = 1\}$ and $B = V \backslash A$. 
Since $\vec{x} \neq \vec{0}$ and $\vec{x} \neq \vec{1}$, we have $A \ne \emptyset$ and $B \ne \emptyset$.
Moreover, (ii) implies $||A|| \ge s|A|$ and $||B|| \ge t|B|$, and (iii) implies $(||A|| - s|A|)+(||B|| - t|B|) \ge T(A \cup B)-1$.
Then $(A,B)$ satisfies the conclusion of Theorem \ref{main2}.
\end{proof}

In the first step, we optimize the following functions $f_1$ and $g_1$ instead of $f_0$ and $g_0$:
\begin{equation*}
\begin{split}
f_1(\vec{x}) &:=\sum_{uv \in E} x_u x_v-(s+t+1)p\sum_{v \in V} x_v\\
g_1(\vec{x}) &:=\sum_{uv \in E} (1-x_u)(1-x_v)-(s+t+1)\bar{p}\sum_{v \in V} (1-x_v)
\end{split}
\end{equation*}

\begin{fact}\label{FAT:incr}
$f_1-g_1$ is an increasing function.
\end{fact}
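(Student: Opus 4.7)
The plan is to compute $f_1 - g_1$ explicitly and show its gradient has nonnegative entries, then invoke Lemma \ref{LEM:BoundofT}(2). Throughout the paper ``increasing function'' on $[0,1]^n$ presumably means monotone nondecreasing in each coordinate, which is the natural notion to use later when rounding a fractional solution.

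The first step is algebraic simplification. For any edge $uv$, the identity
\[
x_u x_v - (1-x_u)(1-x_v) = x_u + x_v - 1
\]
collapses the quadratic terms. Summing over $E$ gives $\sum_{uv\in E}(x_u+x_v-1) = \sum_{v\in V} d(v)x_v - \|V\|$. For the linear parts, I would use the crucial identity $p + \bar p = \frac{s+1}{s+t+2} + \frac{t+1}{s+t+2} = 1$, so that
\[
-(s+t+1)p\sum_v x_v + (s+t+1)\bar p \sum_v(1-x_v) = -(s+t+1)\sum_v x_v + (s+t+1)\bar p\, n.
\]
Combining these two computations yields
\[
f_1(\vec x) - g_1(\vec x) = \sum_{v\in V}\bigl(d(v)-(s+t+1)\bigr)x_v \;-\; \|V\| + (s+t+1)\bar p\, n,
\]
which is an affine function of $\vec x$ whose coefficient on each $x_v$ is $d(v)-(s+t+1)$.

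The second step is to verify that every such coefficient is nonnegative. This is exactly what Lemma \ref{LEM:BoundofT}(2) provides: $\delta(G) > s+t+1+T(G) \ge s+t+1$, so for every vertex $v$ we have $d(v) - (s+t+1) > 0$. Hence $f_1 - g_1$ is (strictly) monotone in each coordinate on $[0,1]^n$, which proves the fact.

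There is no real obstacle here; the entire content is the cancellation $p+\bar p=1$ combined with the minimum-degree bound. The only care needed is to get the signs right when expanding $(1-x_u)(1-x_v)$ and to remember that the constant term $-\|V\| + (s+t+1)\bar p\, n$ is irrelevant for monotonicity. I expect the authors to state the result exactly in the affine form above, since both the explicit coefficients $d(v)-(s+t+1)$ and the constant will likely be reused in the subsequent integer-rounding step.
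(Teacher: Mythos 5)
Your proof is correct and matches the paper's argument exactly: expand $f_1-g_1$ using $x_ux_v-(1-x_u)(1-x_v)=x_u+x_v-1$ and $p+\bar p=1$ to obtain the coefficient $d(v)-(s+t+1)$ on each $x_v$, then apply Lemma \ref{LEM:BoundofT}(2) to conclude these coefficients are positive.
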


\begin{proof}
We have
\begin{eqnarray*}
&&f_1(\vec{x})-g_1(\vec{x})\\
&=&\sum_{uv \in E}(x_u+x_v-1)-(s+t+1)\sum_{v\in V}(px_v-\bar{p}(1-x_v))\\
&=&\sum_{v\in V}(x_v(d(v)-(s+t+1)(p+\bar{p}))+(s+t+1)\bar{p})-e(G)\\
&=&\sum_{v\in V}(x_v(d(v)-(s+t+1))+(s+t+1)\bar{p})-e(G).
\end{eqnarray*}
As $\delta(G)>s+t+1+T$, $f_1-g_1$ is an increasing function.
\end{proof}

For $\vec{x} \in [0,1]^n$, let $fr(\vec{x})=\{v \in V: 0<x_v<1\}$ denote the set of vertices corresponding to the non-integral values of $\vec{x}$. 
For $\vec{x} \in [0,1]^n$, let $\overrightarrow{x_C}$ be its restriction on $fr(\vec{x})$, i.e. $(x_C)_v = x_v$ if $v \in fr(\vec{x})$ and otherwise $0$. 
We show that we can achieve a fractional solution with fractional part being a clique.

\begin{lem}
\label{lem2}
There exists $\vec{y} \in [0,1]^n$ such that
\begin{enumerate}
    \item[(1)] $f_1(\vec{y}) \geq p^2 T, g_1(\vec{y})\ge \bar{p}^2T, \vec{y}\not\in\{\vec 0,\vec1\}$;
    \item[(2)] $C := fr(\vec{y})$ is a clique in $G$;
    \item[(3)] $\sum_{v \in C} (y_v-y_v^2)< (2s+2t+3)p(1-p)+\frac{T}{s+t+2}p(1-p)$; 
    \item[(4)] $\sum_{v \in V} y_v \ge 2(s+t+1)p$ and  
    $\sum_{v \in V} (1-y_v) \ge 2(s+t+1)\bar{p}.$
\end{enumerate}
\end{lem}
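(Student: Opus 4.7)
The natural candidate is $\vec{y}_0 := p \vec{1}$. A direct expansion, using $||V|| - (s+t+1)|V| = T$, gives $f_1(\vec{y}_0) = p^2 T$ and $g_1(\vec{y}_0) = \bar{p}^2 T$; moreover $\sum_v y_{0,v} = pn \ge p(2s+2t+3) > 2(s+t+1)p$, and symmetrically $\sum_v (1-y_{0,v}) = \bar{p} n > 2(s+t+1)\bar{p}$. Thus (1) and (4) already hold at $\vec{y}_0$. However $fr(\vec{y}_0) = V$, which is not a clique since $|V| \ge 2s+2t+3$ exceeds the maximum clique size given by Lemma \ref{LEM:BoundofT}(1).

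The plan is then to iteratively modify $\vec{y}$ so as to reduce $|fr(\vec{y})|$ while maintaining (1) and (4). While $fr(\vec{y})$ contains two non-adjacent vertices $u$ and $v$, I exploit the key observation that $f_1$ and $g_1$ carry no $y_u y_v$ cross term (because $uv \notin E$), and are therefore affine in $(y_u, y_v)$ once the other coordinates are fixed. Combined with the linear constraints from (4), the set of feasible $(y_u, y_v)$ forms a convex polygon in $[0,1]^2$. The part of $\sum_v y_v(1-y_v)$ that changes is $y_u(1-y_u)+y_v(1-y_v)$, a strictly concave function of two variables, so its minimum over the polygon is attained at a vertex; in the typical case this vertex forces $y_u$ or $y_v$ into $\{0,1\}$, reducing $|fr(\vec{y})|$, and I iterate.

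Once the procedure terminates, $C := fr(\vec{y})$ is a clique, so Lemma \ref{LEM:BoundofT}(1) gives $|C| \le 2s+2t+2$. For (3), I would bound $\sum_{v\in C} y_v(1-y_v)$ as follows: in the balanced regime $y_v \approx p$ or $y_v \approx 1-p$, each term is at most $p(1-p)$, producing a leading bound of roughly $(2s+2t+2)p(1-p)$. The slack $\tfrac{T}{s+t+2}\,p(1-p)$ is designed to absorb the deviation from these balanced values incurred during rounding; critically, the strict inequality $T < s+t+2$ from Lemma \ref{LEM:BoundofT}(3) converts the resulting estimate into the strict bound required in (3).

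I expect the principal obstacle to be the degenerate 2D step in which the minimizing vertex of the polygon is cut out by two of the $f_1$-, $g_1$-, or sum-constraints rather than a coordinate-box constraint, so $|fr(\vec{y})|$ fails to decrease. Handling this case will likely require additional structure: selecting a different non-adjacent pair, exploiting the strict monotonicity of $f_1 - g_1$ from Fact \ref{FAT:incr} to break ties, or enlarging the block update to three or more fractional vertices. Keeping (3) and (4) consistent with (1) throughout the iteration will also demand that the rounding direction be chosen so that $\sum_v y_v$ stays close to $pn$ and $\sum_{v\in C} y_v(1-y_v)$ remains controlled, which is where the specific coefficients $p^2 T$, $\bar{p}^2 T$, $(2s+2t+3)p(1-p)$, and $\tfrac{T}{s+t+2}p(1-p)$ in the statement are tuned to work together.
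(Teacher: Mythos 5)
Your overall strategy for (1), (2), (4) matches the paper: start from $p\vec{1}$, observe $f_1(p\vec{1})=p^2T$ and $g_1(p\vec{1})=\bar{p}^2T$, and round along pairs of non-adjacent fractional vertices (where $f_1,g_1$ are affine) to force $fr(\vec y)$ to be a clique while preserving the lower bounds and the sums in (4). Incidentally, the ``degenerate 2D step'' you worry about does not actually arise: in two dimensions the dual cone of the two gradient constraints is always nonempty, so a direction that is simultaneously non-decreasing for $f_1$ and $g_1$ always exists, and you can push one coordinate to $\{0,1\}$. That part of your plan would go through.

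The genuine gap is in (3). Your sketch --- ``in the balanced regime $y_v\approx p$ or $y_v\approx 1-p$, each term is at most $p(1-p)$, leading bound $\approx (2s+2t+2)p(1-p)$, slack absorbs deviations'' --- is not an argument and cannot be made into one as written. There is no reason the surviving fractional coordinates should land near $p$ or $\bar p$ after rounding, and individually each term $y_v(1-y_v)$ can be as large as $1/4$, which strictly exceeds $p(1-p)$ for any $p\ne 1/2$; the required bound is much tighter than any per-coordinate estimate. The paper makes (3) work by adding a selection rule you do not have: among all $\vec y$ satisfying (1), it first minimizes $|fr(\vec y)|$ and then, \emph{crucially}, minimizes $\sum_v y_v$. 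Together with the monotonicity Fact~\ref{FAT:incr} and a one-dimensional rounding argument showing $f'_v(\vec y)>0>g'_v(\vec y)$ for each $v\in C$, this extra minimality forces the exact equality $f_1(\vec y)=p^2T$ and then the inequality $f_1(\vec y_C)\le p^2T$. Expanding $2f_1(\vec y_C)$ over the clique $C$ (all pairs are edges, so the quadratic form is $(\sum_C y_v)^2-\sum_C y_v^2$) and comparing with the concavity bound $\sum_C(y_v-y_v^2)\le\sum_C y_v-(\sum_C y_v)^2/|C|$, one reduces (3) to comparing two concave parabolas in $x=\sum_C y_v$ and locating their intersection, which is where the precise constants $(2s+2t+3)p(1-p)+\frac{T}{s+t+2}p(1-p)$ come from. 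None of this machinery appears in your proposal, so as it stands the proof of (3) is missing.
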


\begin{proof}
Without loss of generality, we may assume $s \le t$.
Let $\vec{y}\in [0,1]^n$ be chosen so that the following conditions are satisfied in order
\begin{itemize}
\item[(i)] $f_1(\vec{y}) \geq p^2 T, g_1(\vec{y}) \geq \bar{p}^2 T, \vec{y}\not\in\{\vec 0,\vec1\}$;
\item[(ii)] $|fr(\vec{y})|$ is minimum;
\item[(iii)]  $\sum_{v \in V} y_v$ is minimum.
\end{itemize}

Such $\vec{y}$ exists because $\vec{y}=p\vec{1}$ satisfies condition (i):
$f_1(p\vec{1}) = p^2 ||G|| - (s+t+1)p^2 |G|= p^2  ((s+t+1) |G| + T - (s+t+1) |G|)= p^2 T$. Similarly $g_1(p\vec{1})  = \bar{p}^2 T$.

First note that (i) implies (4). By (i), we have $f_1(\vec{y}) \ge p^2 T \ge 0$. so 
$0\le \sum_{uv \in E}y_uy_v-(s+t+1)p\sum_{v\in V}y_v< \frac{1}{2} (\sum_{v \in V} y_v)^2- (s+t+1)p \sum_{v \in V} y_v$, which implies  $\sum_{v \in V} y_v \ge 2(s+t+1)p$. Similarly $\sum_{v \in V} (1-y_v) \ge 2(s+t+1)\bar{p}$. 

We claim that $C:= fr(\vec{y})$ is a clique. 
Suppose for a contradiction that there exist $u,v \in fr(\vec{y})$ such that $uv \not\in E$. 
Then $f_1(\vec{y})$ and $g_1(\vec{y})$ are linear functions of $(y_u, y_v)$.
Hence, there exist $\alpha > 0$ and $\vec{r} \in \mathbb{R}^n \backslash \{\vec{0}\}$ with $r_k = 0$ for $k \not\in \{u,v\}$ such that $f_1(\vec{y} + \alpha \vec{r}) \geq f_1(\vec{y})$, $g_1(\vec{y} + \alpha \vec{r}) \geq g_1(\vec{y})$ and $0 \leq y_k + \alpha r_k \leq 1$ for $k \in V$.
Furthermore, we can choose $\alpha$ so that $y_u + \alpha r_u \in \{0,1\}$ or $y_v + \alpha r_v \in \{0,1\}$, and denote $\vec{y'} = \vec{y} + \alpha \vec{r}$. We have $f_1(\vec{y'}) \ge pT^2$ and $g_1(\vec{y'})\ge \bar{p}T^2$. Note that as $\vec{y}$ satisfied condition (i), hence also satisfied (4), $\sum_{v\in V}y_v \ge 2(s+t+1)p =  2\frac{(s+1)(s+t+1)}{s+t+2}>2$ as $s, t> 1/2$. This implies  $\vec{y'}\not=\vec{0}$. Similarly $\vec{y'}\not=\vec{1}$. Therefore, $\vec{y'}$ satisfies condition (i), but $|fr(\vec{y'})| < |fr(\vec{y})|$, which contradicts our choice of $\vec{y}$.

Now we just need to show (3).
Let $f'_v(\vec{x}) := \frac{\partial f_1(\vec{x})}{\partial x_v}$ and $g'_v(\vec{x}):= \frac{\partial g_1(\vec{x})}{\partial x_v}$. Let $\vec{e_v}$ be indicative vector, i.e. $(e_v)_u = 1$ if $u=v$ and otherwise $0$. Let $\vec{y'} := \vec{y} + \alpha \vec{e_v}$ for some $\alpha\not=0$. We have 
$$f_1(\vec{y'}) = f_1(\vec{y}) + \alpha f'_v(\vec{y}),$$ 
$$g_1(\vec{y'}) = g_1(\vec{y}) + \alpha  g'_v(\vec{y})$$

For $v \in fr(\vec{y})$, we have $f'_v(\vec{y})g'_v(\vec{y})<0$. Otherwise we can choose a proper $\alpha\not=0$, such that $f_1(\vec{y'})\ge f_1(\vec{y})\ge p^2 T$ and $g_1(\vec{y'})\ge g_1(\vec{y})\ge \bar{p}^2 T$, and furthermore $y'\in [0,1]^n \backslash \{\vec{0},\vec{1}\}$ but $|fr(\vec{y'})|<|fr(\vec{y})|$, which contradicts  the choice of $\vec{y}$ with condition (ii).
Moreover, by Fact~\ref{FAT:incr}, $f'_v(\vec{y})- g'_v(\vec{y})\ge 0$, we have $f'_v(\vec{y})> 0$ and $g'_v(\vec{y})< 0$.

Furthermore,  we have $f_1(\vec{y}) = p^2 T$. 
For, otherwise, suppose $f_1(\vec{y}) > p^2 T$, 
let $v \in fr(\vec{y})$ and $\vec{y'} = \vec{y} + \alpha \vec{e_v}$.
We may choose  $\alpha<0$ with $|\alpha|$ sufficiently small such that $f_1(\vec{y'})>p^2 T$, and we have $g_1(\vec{y'})>g_1(\vec{y}) \ge p^2 T$, which contradicts the choice of $\vec{y}$ with condition (iii).

We also have $f_1(\vec{y}_C)\leq p^2 T$.
Otherwise, $\vec{y}_C<\vec{y}$ (as $f_1(\vec{y}_C) > f_1(\vec{y})$), and by Fact~\ref{FAT:incr}, $f_1(\vec{y}_C)-g_1(\vec{y}_C) \le f_1(\vec{y})-g_1(\vec{y})$, we have $g_1(\vec{y}_C)\ge g_1(\vec{y})+(f_1(\vec{y}_C)-f_1(\vec{y}))> \bar{p}^2T+(Tp^2-Tp^2)=\bar{p}^2T$. Therefore
$\vec{y}_C$ satisfies condition (i), which contradicts to the choice of $\vec{y}$ with condition (ii).

Let $N = 2s+2t+3$.
Since
\begin{eqnarray*}
2p^2T &\ge& 2f_1(\vec{y}_C) \\
&=& \sum_{u,v \in C} 2y_uy_v - 2(s+t+1)p \sum_{v \in C} y_v\\
&=&(\sum_{v \in C}y_v)^2-\sum_{v \in C}y_v^2-(N-1)p\sum_{v\in C}y_v\\
&=&\sum_{v\in C}(y_v-y_v^2)+(\sum_{v\in C}y_v)^2-((N-1)p+1)\sum_{v\in C}y_v
\end{eqnarray*}
we have $$\sum_{v\in C}(y_v-y_v^2)\le -(\sum_{v\in C}y_v)^2+((N-1)p+1)\sum_{v\in C}y_v+2p^2T.$$

On the other hand, as $C < N=2s+2t+3$, we have
$$\sum_{v \in C} (y_v-y_v^2)
\le (\sum_{v \in C} y_v)-\frac{(\sum_{v \in C} y_v)^2}{|C|}
< (\sum_{v \in C} y_v)-\frac{(\sum_{v \in C} y_v)^2}{N}.$$

Let $x=\sum_{v \in C}y_v$, together we obtain
$$\sum_{v\in C}(y_v-y_v^2)\le \min\{-x^2+((N-1)p+1)x+2p^2T,-\frac{x^2}{N}+x\}.$$

The two curves $L_1: y=-x^2+((N-1)p+1)x+2p^2T$ and $L_2: y=-\frac{x^2}{N}+x$ are both concave down, and they intersect at two points :
$$\begin{array}{ll}
&-x^2+((N-1)p+1)x+2p^2T=-\frac{x^2}{N}+x \\
\Leftrightarrow &(1-\frac 1N)x^2-(N-1)px-2p^2T=0\\
\Leftrightarrow &x^2-Npx-\frac{2p^2TN}{N-1}=0\\
\Leftrightarrow &(x-(N+\epsilon)p)(x+\epsilon p)=0.\\
\end{array}$$
where $\epsilon>0$ satisfies $(N+\epsilon)\epsilon=\frac{2TN}{N-1}$.

We may assume $s \le t$, so $p \le 1/2$.
Note that $L_2: y=-\frac{x^2}{N}+x$ is concave down with value $Np(1-p)$ and  derivative $1-2p$ at $x=Np$, we have $\sum_{v\in C}(y_v-y_v^2)\le Np(1-p)+(1-2p)p\epsilon $ if $x\le Np+\epsilon p$. However $L_1$ and $L_2$ intersect at $x=Np+\epsilon p$, and $L_1: y=-x^2+((N-1)p+1)x+2p^2T$ is decreasing when $x\ge Np+\epsilon p$. So we always have $\sum_{v\in C}(y_v-y_v^2)\le Np(1-p)+(1-2p)p\epsilon $.

Since $(N+\epsilon)\epsilon=\frac{2TN}{N-1}$, we have $\epsilon < \frac{2T}{N-1}$. Therefore
$$\epsilon p(1-2p) < \frac{2T}{N-1}p(1-p)\frac{1-2p}{1-p}=\frac{2T}{N+1}p(1-p)\frac{t-s}{t+1}\frac{s+t+2}{s+t+1} \le \frac{2T}{N+1}p(1-p).$$

From above, we have $\vec{y}$ satisfies (3):
$$\begin{array}{ll}
\sum_{v \in C} (y_v-y_v^2) 
&<  Np(1-p)+\frac{2T}{N+1}p(1-p) \\
&=  (2s+2t+3)p(1-p)+\frac{T}{s+t+2}p(1-p) \\
&< (2s+2t+4)p(1-p).
\end{array}$$
\end{proof}

From now on, we fix $\vec{y} \in [0,1]^n$ that satisfies the conclusion of Lemma \ref{lem2}. 
In particular, we have $C := fr(\vec{y})$ is a clique. 
We define
$$f_2(\vec{x}) :=f_0(\vec{x})-\frac{((\sum_C x_v)-(\sum_C y_v)+\bar{p})^2}{2}-\frac{\sum_C (x_v-x_v^2)}{2};$$
$$g_2(\vec{x}) :=g_0(\vec{x})-\frac{((\sum_C x_v)-(\sum_C y_v)+p))^2}{2}-\frac{\sum_C (x_v-x_v^2)}{2}.$$
Note that $f_2$ and $g_2$ are lower bounds for $f_0$ and $g_0$ respectively. 
As $C$ is a clique by Lemma \ref{lem2} (2), it is not difficult to see that $f_2(\vec{x})$ and $g_2(\vec{x})$ are linear functions of $\vec{x}|_C$. 
Similar to the proof of Lemma \ref{lem2}, we have the following

\begin{lem}
\label{lem3}
There exists $\vec{z} \in [0,1]^n$ such that
\begin{enumerate}
    \item[(1)] $\vec{z}|_{V - C} = \vec{y}|_{V-C}$ and $|fr(\vec{z})|\le 1$;
    \item[(2)] $f_2(\vec{z}) \ge f_2(\vec{y})$ and 
$g_2(\vec{z})\ge g_2(\vec{y}).$
\end{enumerate}
\end{lem}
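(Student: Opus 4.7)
The plan is to mimic the extremal framework of Lemma \ref{lem2}, leveraging the fact that the functions $f_2$ and $g_2$, viewed as functions of $\vec{x}|_C$ with $\vec{x}|_{V \setminus C} = \vec{y}|_{V \setminus C}$ held fixed, are \emph{affine}. This is the key structural observation and is the whole point of the definitions of $f_2$ and $g_2$. Indeed, since $C$ is a clique by Lemma \ref{lem2}(2), the edges inside $C$ contribute $\sum_{\{u,v\} \subseteq C} x_u x_v = \tfrac{1}{2}\big((\sum_C x_v)^2 - \sum_C x_v^2\big)$ to $f_0(\vec{x})$; the correction terms $-\tfrac{1}{2}((\sum_C x_v) - (\sum_C y_v) + \bar{p})^2$ and $-\tfrac{1}{2}\sum_C(x_v - x_v^2)$ that are subtracted off in the definition of $f_2$ are designed precisely so that a direct expansion cancels all quadratic dependence on $\vec{x}|_C$, leaving $f_2$ affine in $\vec{x}|_C$. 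The identical calculation works for $g_2$.

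With this in hand, I would choose $\vec{z} \in [0,1]^n$ satisfying $\vec{z}|_{V \setminus C} = \vec{y}|_{V \setminus C}$, $f_2(\vec{z}) \ge f_2(\vec{y})$, and $g_2(\vec{z}) \ge g_2(\vec{y})$, and with $|fr(\vec{z})|$ minimum among all such choices. The choice $\vec{z} = \vec{y}$ shows such a $\vec{z}$ exists, and since $\vec{z}|_{V \setminus C} = \vec{y}|_{V \setminus C}$ takes only $\{0,1\}$ values, we have $fr(\vec{z}) \subseteq C$. It then remains to show $|fr(\vec{z})| \le 1$.

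Suppose for contradiction that there exist two distinct fractional coordinates $u, v \in fr(\vec{z})$. Restricting to perturbations $\vec{r}$ supported on $\{u, v\}$, the two requirements ``$f_2$ does not decrease along $\vec{r}$'' and ``$g_2$ does not decrease along $\vec{r}$'' become two closed half-plane conditions on $(r_u, r_v) \in \mathbb{R}^2$ through the origin. Any two such half-planes share at least one nonzero direction, so there exists $(r_u, r_v) \ne (0,0)$ admissible. Taking $\alpha > 0$ as large as possible so that $\vec{z}' := \vec{z} + \alpha \vec{r} \in [0,1]^n$ produces a new feasible point (the new values of $f_2$ and $g_2$ are at least as large since both are affine) with $|fr(\vec{z}')| < |fr(\vec{z})|$, contradicting the minimality of $\vec{z}$.

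The main technical item is the linearity claim in the first paragraph, which is a direct though slightly delicate expansion hinging on $C$ being a clique; once linearity is in hand, the rest is essentially the clique-producing argument already carried out in Lemma \ref{lem2}. A minor simplification compared to Lemma \ref{lem2} is that the conclusion of Lemma \ref{lem3} does not require $\vec{z} \notin \{\vec{0}, \vec{1}\}$, so there is no extra bookkeeping needed to exclude the all-zero or all-one vectors when moving along $\vec{r}$.
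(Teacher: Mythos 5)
Your proposal is correct and matches the paper's intended argument: the paper records only the remark that $f_2, g_2$ become linear in $\vec{x}|_C$ once $C$ is a clique and then defers to "Similar to the proof of Lemma~\ref{lem2}", which is exactly the perturbation-and-push-to-boundary scheme you carry out. Your verification that the quadratic terms cancel, the observation that two closed half-planes through the origin in $\mathbb{R}^2$ always share a nonzero direction, and your note that (unlike in Lemma~\ref{lem2}) no bookkeeping is needed to avoid $\vec{0}$ and $\vec{1}$ are all sound.
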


Let us fix $\vec{z}$ that satisfies Lemma \ref{lem3}.
We will see that $\vec{z}$ is almost a solution except for at most one non-integral vertex. 
We define
$$\begin{array} {ll}
&X:= p^2 T -\frac{p(1-p)T}{2s+2t+4} \geq p^2 T - \frac{p\bar{p}}{2}; \\
&A:=\sum_{v \in V} y_v-(s+t+1)p-\frac{1}{2}; \\
&Y:= \bar{p}^2 T -\frac{p(1-p)T}{2s+2t+4} \ge \bar{p}^2 T - \frac{p\bar{p}}{2}; \\
&B:=\sum_{v \in V} (1-y_v)-(s+t+1)\bar{p}-\frac{1}{2}.
\end{array}$$

\begin{lem}
\label{lem4}
All the followings hold:
\begin{enumerate}
    \item[(1)] $X \ge 0$, $Y \ge 0$, $A \ge (s+t+1)p-\frac{1}{2}> 0$, $B \ge (s+t+1)\bar{p}-\frac{1}{2} > 0$ and $A + B = |V| - (s+t+2) \ge T$;
    \item[(2)] $f_2(\vec{z}) \ge X + A\bar{p} > 0$ and $g_2(\vec{z}) \ge Y + Bp > 0$;
    \item[(3)] $f_2(\vec{z}) + g_2(\vec{z}) \ge X+Y+A\bar{p}+Bp \ge T - \frac{7}{12}$.
\end{enumerate}
\end{lem}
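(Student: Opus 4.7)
My plan is to dispatch parts (1) and (2) as bookkeeping against the previously proved lemmas, then focus the main effort on part (3).

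For part (1), the nonnegativities $X\geq 0$ and $X\geq p^2T-p\bar p/2$ both reduce, using $p(1-p)=p\bar p$, to $T<s+t+2$ (Lemma~\ref{LEM:BoundofT}(3)) together with the elementary inequality $p(2s+2t+5)\geq 1$ obtained by expanding $p=(s+1)/(s+t+2)$. For $A$, I plug in $\sum_v y_v\geq 2(s+t+1)p$ from Lemma~\ref{lem2}(4), which gives $A\geq (s+t+1)p-1/2$; the strict positivity $(s+t+1)p-1/2>0$ is a direct expansion using $s,t>1/2$. The identity $A+B=|V|-(s+t+2)$ comes from adding the definitions and using $p+\bar p=1$, and $A+B\geq T$ is exactly Lemma~\ref{LEM:BoundofT}(2). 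The claims about $Y$ and $B$ are symmetric.

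For part (2), the key preliminary is the identity
\[ f_0(\vec x)-f_1(\vec x)=\bigl((s+t+1)p-s\bigr)\sum_{v}x_v=\bar p\sum_v x_v, \]
which is immediate from $p=(s+1)/(s+t+2)$. Setting $\vec x=\vec y$ in the definition of $f_2$ yields
\[ f_2(\vec y)=f_1(\vec y)+\bar p\sum_v y_v-\frac{\bar p^2}{2}-\frac{1}{2}\sum_C(y_v-y_v^2). \]
I then substitute $f_1(\vec y)\geq p^2T$ (Lemma~\ref{lem2}(1)), $\sum_v y_v=A+(s+t+1)p+1/2$ (from the definition of $A$), and $\sum_C(y_v-y_v^2)<(2s+2t+3)p\bar p+Tp\bar p/(s+t+2)$ (Lemma~\ref{lem2}(3)). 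Using $\bar p/2-\bar p^2/2=p\bar p/2$, the coefficients of $p\bar p$ telescope because $(s+t+1)+1/2-(2s+2t+3)/2=0$, and what remains is exactly $p^2T-Tp\bar p/(2(s+t+2))+A\bar p=X+A\bar p$. Lemma~\ref{lem3}(2) then transfers this bound from $\vec y$ to $\vec z$, and $X+A\bar p>0$ is immediate from part~(1). The argument for $g_2$ is symmetric.

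The heart of the lemma is part (3). Summing the two bounds from part (2) gives $f_2(\vec z)+g_2(\vec z)\geq X+Y+A\bar p+Bp$, and the looser bound on $X,Y$ from part (1) yields $X+Y\geq T-2p\bar pT-p\bar p$, so the task reduces to $A\bar p+Bp\geq 2p\bar pT+p\bar p-7/12$. Assuming WLOG $s\leq t$, I rewrite $A\bar p+Bp=(A+B)p+A(\bar p-p)$ and plug in the lower bounds $A+B\geq T$ and $A\geq (s+t+1)p-1/2$ from part~(1). Setting $a:=\bar p-p\in[0,1)$, so that $p\bar p=(1-a^2)/4$ and $p(\bar p-p)=(1-a)a/2$, the combined inequality collapses to
\[ \frac{(1-a)a}{2}\bigl((s+t+1)-T\bigr)-\frac{1-a^2}{4}-\frac{a}{2}\geq -\frac{7}{12}. \]
When $T\leq s+t+1$ this is easy since the first term is nonnegative and the remainder has negative discriminant. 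When $T=s+t+1+\eta$ with $0<\eta<1$ (invoking $T<s+t+2$ from Lemma~\ref{LEM:BoundofT}(3)), bounding $\eta<1$ reduces the inequality to $9a^2-12a+4\geq 0$, i.e., the perfect square $(3a-2)^2\geq 0$.

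The main obstacle is precisely this perfect-square collapse in part (3): the constant $7/12$ is tight, attained in the limit $a\to 2/3$, $T\to s+t+2$, so the algebra must be arranged so that the residual lands on a perfect square. Any looser use of the bounds on $A$, $B$, or $A+B$ from part (1) would give a worse constant; everything else in the proof is essentially driven by the identity $(s+t+1)p=s+\bar p$.
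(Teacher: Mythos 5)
Your proof is correct and follows essentially the same route as the paper: the same telescoping identity $(s+t+1)p = s + \bar p$ in part (2), and in part (3) the same decomposition $A\bar p + Bp = (A+B)p + A(\bar p - p)$ with the same lower bounds $A+B\ge T$, $A\ge (s+t+1)p-\tfrac12$, $T<s+t+2$. Your substitution $a=\bar p - p$ and the resulting $(3a-2)^2\ge 0$ is exactly the paper's perfect square $3(p-\tfrac16)^2\ge 0$ under $p=(1-a)/2$, and your case split $T\lessgtr s+t+1$ is a cosmetic reorganization of the paper's one-step replacement $s+t+1\ge T-1$.
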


\begin{proof}
First we show (1). 
$X =  p^2 T -\frac{p(1-p)T}{2s+2t+4} = pT ( \frac{s+1}{s+t+2} - \frac{1-p}{2s+2t+4}) \ge 0$.
Similarly, $Y \ge 0$.

By Lemma \ref{lem2} (4), $\sum_V y_v \ge 2(s+t+1)p$ and $\sum_V (1 - y_v) \ge 2(s+t+1)\bar{p}$.
So $A = \sum_V y_v -(s+t+1)p-\frac{1}{2} \ge (s+t+1)p - \frac12=s+\bar{p}-\frac12 > 0$. 
Similarly we have $B\ge (s+t+1)\bar{p}-\frac12 > 0$.

Moreover, $A+B=\sum_{V}y_v-((s+t+1)p+1/2)+\sum_{V}(1-y_v)-((s+t+1)\bar{p}+\frac12)=|V|-(s+t+2) > T$ by Lemma \ref{LEM:BoundofT} (2).

Next we show (2). 
By Lemma \ref{lem2} (1)(3)(4) and Lemma \ref{lem3} (2), 
$$\begin{array} {ll}
f_2(\vec{z})
&\ge f_2(\vec{y}) \\
&= f_0(\vec{y}) - \frac{\bar{p}^2}{2} - \frac{\sum_C (y_v-y_v^2)}{2}\\
&= f_1(\vec{y}) + \bar{p} \sum_V y_v - \frac{\bar{p}^2}{2} - \frac{\sum_C (y_v-y_v^2)}{2} \\
&\ge p^2 T + \bar{p} \sum_V y_v - \frac {1}{2}(1-p)\bar{p} - \frac{1}{2}(2s+2t+3)p\bar{p} - \frac{p(1-p)T}{2s+2t+4} \\
&= p^2 T - \frac{p(1-p)T}{2s+2t+4} + (\sum_V y_v - (s+t+1)p-\frac{1}{2})\bar{p} \\
&= X + A\bar{p}>0\\
\end{array}$$
Similarly, $g_2(\vec{z}) \geq Y + Bp > 0$.

Finally we show (3). 
We may assume $s \le t$, hence $p=\frac{s+1}{s+t+2} \le \frac12$. By (1), we have
$$\begin{array}{ll}
&X+Y+A\bar{p}+Bp\\
\ge& Tp^2-\frac{p\bar{p}}{2}+T\bar{p}^2-\frac{p\bar{p}}{2}+p(A+B)+(1-2p)A\\
\ge& Tp^2+T\bar{p}^2-p\bar{p}+pT+(1-2p)((s+t+1)p-\frac12) \text{\quad (as $A+B \ge T$)}\\
\ge& Tp^2+T\bar{p}^2+pT-p(1-p)+(1-2p)((T-1)p-\frac12) \text{\quad (as $T \le s+t+2$)}\\
=& Tp^2+T\bar{p}^2+pT+Tp(1-2p)-p+p^2-(1-2p)\frac{1+2p}{2}\\
=&Tp^2+T\bar{p}^2+2Tp(1-p)-\frac{1}{2}+3p^2-p\\
=&T+3(p-\frac{1}{6})^2-\frac{1}{12}-\frac{1}{2}\\
\ge&T-\frac{7}{12} 
\end{array}$$
\end{proof}

By Lemma~\ref{lem3}~(1), we may assume $z_v\in \{0,1\}$ when $v\not=w$ for some vertex $w$. 
Let $\hat{z}^+$, $\hat{z}^-$ be two vectors obtained from $\hat{z}$ by setting $\hat{z_w}^+ = 1$, $\hat{z_w}^- = 0$ and $\hat{z_v}^+ = \hat{z_v}^- = z_v$ for $v \in V - \{w\}$. 

We are going to show that either $\hat{z}^+$ or $\hat{z}^-$ satisfies the conditions in Claim~\ref{clm:goal}, hence complete the proof of our main theorem.

\begin{proof}[Proof of Theorem~\ref{main2}]
As $f_0\ge f_2$ and $g_0\ge g_2$ in $[0,1]^n$,
it suffices to show that at least one of the followings holds:
\begin{enumerate}
    \item[(1)] $f_2(\hat{z}^+) > 0$, $g_2(\hat{z}^+) > 0$ and $f_2(\hat{z}^+) + g_2(\hat{z}^+) > T-1$; 
    \item[(2)] $f_2(\hat{z}^-) > 0$, $g_2(\hat{z}^-) > 0$ and $f_2(\hat{z}^-) + g_2(\hat{z}^-) > T-1$.
\end{enumerate}

We need the following notations in the proof: 
$$\begin{array} {ll}
&A' := A - \sum_{v \in V - \{w\}, vw\not\in E}y_v, \\
&B' := B - \sum_{v \in V - \{w\}, vw\not\in E}(1-y_v). 
\end{array}$$
Since $\vec{y}|_{V-C} = \vec{z}|_{V-C}$, we have
$$\begin{array} {ll}
\frac{\partial f_2}{\partial z_w}(\vec{z}) 
&=\sum_{wv \in E} z_v - s - (\sum_{v \in V - \{w\}} z_v - \sum_{v \in V} y_v + \bar{p}) - \frac12 \\
&= \sum_{v \in V} y_v - (s + \bar{p})- \frac12 - \sum_{v \in V - \{w\}, vw \not\in E} z_v \\
&= \sum_{v \in V} y_v - (s+t+1)p - \frac{1}{2} - \sum_{v \in V - \{w\}, vw \not\in E} y_v \\
&= A'.
\end{array}$$
Similarly, $\frac{\partial g_2}{\partial z_w}(\vec{z}) = -B'$.
Hence, we have the following
$$\begin{array} {ll}
&f_2(\hat{z}^+) \ge X+A\bar{p}+A'(1-z_w),\\
&g_2(\hat{z}^+) \ge Y+Bp-B'(1-z_w),\\
&f_2(\hat{z}^-) \ge X+A\bar{p}-A'z_w,\\
&g_2(\hat{z}^-) \ge Y+Bp+B'z_w.
\end{array}$$
Note $A' + B' = |V| - (s+t+2) - \sum_{v \in V - \{w\}, vw \not\in E} 1 = d(w) + 1 - (s+t+2) \ge \delta(G) - (s+t+1) > T \ge 0$ by Lemma \ref{LEM:BoundofT} (2). 
So at least one of $A', B'$ is positive. 

We may assume $A', B' >  0.$  For, otherwise, suppose $A' > 0, B' \le 0$ by symmetry. 
Then by Lemma \ref{lem4} (2)(3), $f_2(\hat{z}^+)  \ge X + A\bar{p} > 0$,  $g_2(\hat{z}^+) \ge Y + Bp > 0$, and $f_2(\hat{z}^+) + g_2(\hat{z}^+) \ge X+Y+A\bar{p}+Bp > T - \frac{7}{12}$.
So $\hat{z}^+$ is as desired.

Without loss of generality, we may assume $A' \ge B' > 0$.

If $z_w > 1-\frac{Y+Bp}{B'}$, we have $f_2(\hat{z}^+)=f_2(\hat{z})+A'(1-z_w)\ge X+A\bar{p} > 0$,
$g_2(\hat{z}^+) = g_2(\hat{z})-B'(1-z_w) > Y+Bp-B'(\frac{Y+Bp}{B'})\ge 0$,
and $f_2(\hat{z}^+)+g_2(\hat{z}^+)=f_2(\hat{z})+g_2(\hat{z})+(A'-B')(1-z_w)\ge X+A\bar{p} + Y+Bp \ge T-\frac{7}{12}$.

So we may assume $0\le z_w \le 1-\frac{Y+Bp}{B'}$. This implies that $B'\ge Y+Bp\ge Y+B'p$, hence $B'\ge Y/{\bar{p}}$.

Similarly to above, we have $g_2(\hat{z}^-)=g_2(\hat{z})+B'z_w \ge Y + Bp \ge 0$, and as $z_w\le 1-\frac{Y+Bp}{B'}\le 1-p = \bar{p} \le \frac{X+A\bar{p}}{A'}$, we have 
$f_2(\hat{z}^-)=f_2(\hat{z})-A' z_w \ge X+A\bar{p}-A' \frac{X+A\bar{p}}{A'} \ge 0$.

It is just left to show that $f_2(\hat{z}^-)+g_2(\hat{z}^-)\ge T-1$.
Since $A \ge A'$ and $B \ge B'$, we have
$$\begin{array}{ll}
&f_2(\hat{z}^-)+g_2(\hat{z}^-)\\
\ge& X+Y+A\bar{p}+Bp-(A'-B')(1-\frac{Y+Bp}{B'})\\
=&X+A\bar{p}-A'(1-\frac{Y+Bp}{B'})+B'\\
\ge&X+A\bar{p}-A(1-\frac{Y+Bp}{B'})+B'\\
\ge&X+A\bar{p}-A(1-\frac{Y+B'p}{B'})+B'\\
=&X+\frac{AY}{B'}+B'\\
\end{array}$$

If $AY\ge (Y/\bar{p})^2$, then 

$$\begin{array}{lll}
&&X+\frac{AY}{B'}+B'\\
&\ge& X+2\sqrt{AY}\\
&\ge& X+2Y/\bar{p}\\
&\ge& Tp^2-\frac{p\bar{p}}2+2(T\bar{p}^2-\frac{p\bar{p}}{2})/\bar{p}\\
&=&T(p^2+2-2p)-(\frac{p\bar{p}}{2}+p)\\
&=&T(1+(1-p)^2)-\frac{p(3-p)}{2}\\
&\ge& T-1.
\end{array}$$

If $AY\le (Y/\bar{p})^2$, then since $B' \ge Y/\bar{p} \ge \sqrt{AY}$, we have

$$\begin{array}{lll}
&&X+\frac{AY}{B'}+B'\\
&\ge& X+\frac{AY}{Y/\bar{p}}+Y/\bar{p}\\
&\ge& X+A\bar{p}+Y/\bar{p}\\
&\ge& Tp^2-\frac{p\bar{p}}2+((s+t+1)p-\frac12)\bar{p}+(T\bar{p}^2-\frac{p\bar{p}}{2})/\bar{p}\\
&>& Tp^2-\frac{p\bar{p}}2+((T-1)p-\frac12)\bar{p}+(T\bar{p}^2-\frac{p\bar{p}}{2})/\bar{p}\\
&=&T(p^2+p\bar{p}+\bar{p})-(\frac{3p\bar{p}}{2}+\frac{\bar{p}}{2}+\frac{p}2)\\
&=&T(p+\bar{p})-(\frac{3p\bar{p}}{2}+\frac12)\\
&\ge& T-\frac 78.
\end{array}$$
\end{proof}


\begin{thebibliography}{99}

\bibitem{AE20}
A. Armstrong and N. Eaton. New restrictions on defective coloring with applications to Steinberg-type graphs. \textit{J. Comb. Optim.} \textbf{40} (2020), no. 1, 181--204.

\bibitem{Ar87}
D. Archdeacon. A note on defective colorings of graphs in surfaces. \textit{J. Graph Theory}, \textbf{11} (1987), no. 4, 517--519.

\bibitem{Ba17}
A. Ban. Decomposing weighted graphs. \textit{J. Graph Theory}, \textbf{86} (2017),  250--254.

\bibitem{BK77}
O.V. Borodin and A.V. Kostochka. On an upper bound of a graph’s chromatic number, depending on the graph’s degree and density. \textit{J. Comb. Theory B}, \textbf{23} (1977), 247-–250.

\bibitem{BK14}
O.V. Borodin and A.V. Kostochka. Defective 2-colorings of sparse graphs. \textit{J. Combin. Theory Ser. B}, \textbf{104} (2014), 72--80.

\bibitem{BKY13}
O.V. Borodin, A. Kostochka and M. Yancey, On 1-improper 2-coloring of sparse graphs. \textit{Discrete Math.} \textbf{313} (2013), no. 22, 2638--2649.

\bibitem{BL16}
A. Ban and N. Linial. Internal partitions of regular graphs. \textit{J. Graph Theory}. \textbf{83} (2016), 5--18.

\bibitem{BLM17}
R. Belmonte, M. Lampis and V. Mitsou. Defective coloring on classes of perfect graphs. \textit{Graph-theoretic concepts in computer science}, 113--126, Lecture Notes in Comput. Sci., 10520, Springer, Cham, 2017.

\bibitem{BLM20}
R. Belmonte, M. Lampis and V. Mitsou. Parameterized (approximate) defective coloring. \textit{SIAM J. Discrete Math.} \textbf{34} (2020), no. 2, 1084--1106.

\bibitem{BS02}
 B. Bollobás and A.D. Scott. Problems and results on judicious partitions. \textit{Random Structures Algorithms}, \textbf{21} (2002), 414–-430.

\bibitem{BTV07}
C. Bazgan, Z. Tuza and D. Vanderpooten. Efficient algorithms for decomposing graphs under degree constraints.
\textit{Discrete Appl. Math.}, \textbf{155} (2007), 979--988.

\bibitem{CCW86}
L.J. Cowen, R.H. Cowen and D.R. Woodall. Defective colorings of graphs in surfaces: partitions into subgraphs of bounded valency. \textit{J. Graph Theory}, \textbf{10} (1986), 187--195.

\bibitem{CE19}
I. Choi and L. Esperet. Improper coloring of graphs on surfaces. \textit{J. Graph Theory}, \textbf{91} (2019), no. 1, 16--34.

\bibitem{CGJ97}
L. Cowen, W. Goddard and C.E. Jesurum. Defective coloring revisited. \textit{J. Graph Theory}, \textbf{24} (1997), no. 3, 205--219.

\bibitem{CLNWY15}
E. Cs\'{o}ka, I. Lo, S. Norin, H. Wu and L. Yepremyan. The extremal function for disconnected minors. \emph{J. Combin. Theory Ser. B}, \textbf{126} (2017), 162--174. 

\bibitem{CLO18}
I. Choi, C.-H. Liu and S. Oum. Characterization of cycle obstruction sets for improper coloring planar graphs. \textit{SIAM J. Discrete Math.}, \textbf{32}  (2018), no. 2, 1209--1228. 

\bibitem{CSY19}
Y. Chu, L. Sun and J. Yue. Note on improper coloring of 1-planar graphs. \textit{Czechoslovak Math. J.}, \textbf{69} (144) (2019), no. 4, 955--968.

\bibitem{Di00}
A.A. Diwan. Decomposing graphs with girth at least five under degree constraints.
\textit{J. Graph Theory}, \textbf{33} (2000), 237--239.

\bibitem{FH94}
M. Frick and M.A. Henning. Extremal results on defective colorings of graphs. \textit{Discrete Math.}, \textbf{126} (1994), no. 1--3, 151--158.

\bibitem{Ge65}
L. Gerencsér. Szinezesi problemacrol. \textit{Mat. Lapok}, \textbf{16} (1965), 274--277.

\bibitem{GK00}
M.U. Gerber and D. Kobler. Algorithmic approach to the satisfactory graph partitioning problem. \textit{European J. Oper. Res.}, \textbf{125} (2000), 283--291.

\bibitem{GK04}
M.U. Gerber and D. Kobler. Classes of graphs that can be partitioned to satisfy all their vertices. \textit{Australas. J. Comb.}, \textbf{29} (2004), 201--214.

\bibitem{HKS09}
F. Havet, R. Kang and J.-S. Sereni. Improper coloring of unit disk graphs. \textit{Networks} \textbf{54} (2009), no. 3, 150--164.

\bibitem{HMYZ18}
J. Hou, H. Ma, J. Yu and X. Zhang. On partitions of $K_{2,3}$-free graphs under degree constraints. \textit{Discrete Math.}, \textbf{341} (2018), 3288--3295.

\bibitem{Ka98}
A. Kaneko. On decomposition of triangle-free graphs under degree constraints. \textit{J. Graph Theory}, \textbf{27} (1998), 7--9.

\bibitem{KKX14}
J. Kim, A. Kostochka and X. Zhu. Improper coloring of sparse graphs with a given girth, I: (0,1)-colorings of triangle-free graphs. \textit{European J. Combin.} \textbf{42} (2014), 26--48. 

\bibitem{KKX16}
J. Kim, A. Kostochka and X. Zhu. Improper coloring of sparse graphs with a given girth, II: constructions. \textit{J. Graph Theory} \textbf{81} (2016), no. 4, 403--413.

\bibitem{LL20}
 N. Linial and S. Louis. Asymptotically almost every 2r-regular graph has an internal partition. \textit{Graphs Combin.}, \textbf{36} (2020), 41--50.

\bibitem{LX17}
M. Liu and B. Xu. On partitions of graphs under degree constraints.
\textit{Discrete Appl. Math.}, \textbf{226} (2017), 87--93.

\bibitem{LX21}
M. Liu and B. Xu. On a conjecture of Schweser and Stiebitz.
\textit{Discrete Appl. Math.}, \textbf{295} (2021), 25--31.

\bibitem{LX22}
M. Liu, and B. Xu. On connected partition with degree constraints. \textit{Discrete Mathematics} \textbf{345} (2022), 112680.

\bibitem{Lo66}
L. Lov\'{a}sz. On decomposition of graphs. \emph{Studia Sci. Math. Hungal}, \textbf{1} (1966), 237--238. 

\bibitem{MY19}
J. Ma and T. Yang. Decomposing $C_4$-free graphs under degree constraints.
\textit{J. Graph Theory}, \textbf{90} (2019), 13--23.

\bibitem{Ra11}
T. Rackham. The number of defective colorings of graphs on surfaces. \textit{J. Graph Theory}, \textbf{68} (2011), no. 2, 129--136.

\bibitem{SD02}
K. H. Shafique and R. D. Dutton. On satisfactory partitioning of graphs. \textit{Proceedings of the Thirty-third Southeastern International Conference on Combinatorics, Graph Theory and Computing (Boca Raton, FL, 2002)}, \textbf{154} (2002), 183--194.

\bibitem{Sh12}
Y. Shang. Improper coloring of random geometric graphs. \textit{J. Adv. Res. Appl. Math.} \textbf{4} (2012), no. 1, 1--9.

\bibitem{SN18}
P. Sittitrai and K. Nakprasit. Defective 2-colorings of planar graphs without 4-cycles and 5-cycles. \textit{Discrete Math.} \textbf{341} (2018), no. 8, 2142--2150.

\bibitem{SS19}
T. Schweser and M. Stiebitz. Partitions of multigraphs under minimum degree constraints.
\textit{Discrete Appl. Math.}, \textbf{257} (2019), 269--275.

\bibitem{St96}
M. Stiebitz. Decomposing graphs under degree constraints. \emph{J.
Graph Theory}, \textbf{23} (3): 321--324, 1996.

\bibitem{SX20}
J. Song and B. Xu. Partitions of graphs and multigraphs under degree constraints.
\textit{Discrete Appl. Math.}, \textbf{279} (2020), 134--145.

\bibitem{SX21}
J. Song, and B. Xu. Partitions of Multigraphs Without $C_4$. \textit{Graphs and Combinatorics}, (2021), 1--17.

\bibitem{Th83}
C. Thomassen. Graph decomposition with constraints on the connectivity and minimum degree. \textit{J. Graph Theory}, \textbf{7} (1983), 165--167.

\end{thebibliography}
\end{document}